\title{Limit points of the iterative scaling procedure}
\author{Erik Aas}
\date{}
\theoremstyle{definition}
\newtheorem{lemma}{Lemma}
\newtheorem{proposition}{Proposition}
\newcommand{\p}{^\prime}
\newcommand{\mcl}{\mathcal}
\newcommand{\e}{\varepsilon}
\newcommand{\R}{\mcl{R}}
\newcommand{\C}{\mcl{C}}
\newcommand{\vphi}{\varphi}
\newcommand{\vtheta}{\vartheta}
\begin{document}
\maketitle
\abstract{The iterative scaling procedure (ISP) is an algorithm which computes a sequence of matrices, starting from some given matrix. The objective is to find a matrix 'proportional' to the given matrix, having given row and column sums. In many cases, for example if the initial matrix is strictly positive, the sequence is convergent. In the general case, it is known that the sequence has at most two limit points. When these are distinct, convergence can be slow. We give an efficient algorithm which finds these limit points, invoking the ISP only on instances for which the procedure is convergent.}

\section*{Introduction}

The {\it iterative scaling procedure} (ISP) is an algorithm which, given an $m\times n$ entrywise nonnegative matrix $A$ and positive numbers $r_1,\dots,r_m$, $c_1,\dots,c_n$ attempts to find a matrix diagonally equivalent to $A$, having row sums $r_i$ and column sums $c_j$. Two matrices $A$ and $A\p$ are {\it diagonally equivalent} if there are strictly positive numbers $x_1,\dots,x_m$, $y_1,\dots,y_n$ such that $a_{ij} = x_ia\p_{ij}y_j$ for all $i,j$.

The ISP has been applied in a variety of contexts, the most interesting of which perhaps being the ranking of webpages \cite{Kni}. A discrete version of the algorithm is used by the Z\"urich City Council to distribute seats in parliamentary elections \cite{Puk}.

We proceed by defining the ISP.
Throughout, $A$ will denote a fixed nonnegative $m\times n$ matrix, and $r_1,\dots,r_m,c_1,\dots,c_n$ fixed positive numbers. We further assume that there is no row or column in $A$ containing only zeros.
All matrices considered will be nonnegative entry by entry and denoted by capital letters.
Matrix entries will be denoted by the corresponding lower case letters.
For example, the $(i,j)$ entry in $A$ is denoted $a_{ij}$.
By the {\it row adjustment} (to $r_1,\dots,r_m$) $\R(A)$ of $A$ we mean the matrix whose $(i,j)$ entry is entries $x_ia_{ij}$, where $x_i = \frac{r_i}{\sum_j a_{ij}}$, and we define the {\it column adjustment} $\C(A)$ (to $c_1,\dots,c_n$) similarly.
The numbers $x_i$ will be referred to as {\it row multipliers}.
Note that $\R(A) = A = \C(A)$ in case $A$ has both the desired row and column sums.

The iterative scaling procedure consists of adjusting rows and columns alternatingly, starting with $A$.
The iterates under the scaling procedure are defined to be $B(k) := \R(C(k-1))$, $C(k) = \C(B(k))$ for $k \geq 1$ and $B(1)= \R(A)$.

It is known that the sequences $B(k)$ and $C(k)$ are convergent \cite{CsiShi}, and that if there is any matrix $D$ with both the desired row sums and column sums, with the property that $a_{ij} = 0 \Rightarrow d_{ij} = 0$, then those two limits are equal (\cite{CsiShi}, see also \cite{Pretzel}).
If there is no such matrix $D$, the limits can clearly not be equal.
In fact, if the limits are not equal, but $\sum_i r_i = \sum_j c_j$, then the support of the limit points are not equal to the support of the initial matrix, that is, some entries in the matrix tend to zero during the ISP.
The objective of this note is to describe an algorithm which finds these entries efficiently (in time less than quadratic in input size - using the ISP itself to find these entries can require arbitrarily many steps, though not on realistic instances).
Proving that the algorithm works is non-trivial, and gives some insight into the structure of the limit points.

We will use the following observation in what follows. If we scale the desired row sums by a common factor $t$, then this gives new ISP sequence $B\p(k), C\p(k)$ closely related to the original sequence; we have $B\p(k) = tB(k)$ and $C\p(k) = C(k)$.

\vspace{0.5cm}

{\bf Acknowledgements} \\
I am very thankful to Fabian Reffel who read an early version of this note, finding a serious error and providing many helpful suggestions. 
I would also like to express my gratitude to Kai-Friedrike Oelbsmann and the Augsburg group headed by Prof. Friedrich Pukelsheim for telling me about the present problem and electoral methods in general.
\section*{An example}

We choose the following initial data, writing the desired row and column sums on the borders of the matrix $A$.

$\begin{array}{c|cccc}
 	& 4 & 4 & 2 & 1 \\ \hline
6 & 1 & 0 & 0 & 0 \\
6 & 1 & 1 & 0 & 0 \\
4 & 1 & 1 & 7 & 2 \\
1 & 1 & 1 & 9 & 6 \\
\end{array}$

After $1000$ iterations we get the following matrices:

$B(500) = 
\left(
\begin{array}{cccc}
6       & 0      & 0     &  0\\
0.171   & 5.83   & 0     &  0\\
0.00907 & 0.309  & 2.61  &  1.08\\
0.00132 & 0.0447 & 0.486 &  0.468\\
\end{array}
\right)$ and

$C(500) = 
\left(
\begin{array}{cccc}
3.88     & 0      & 0     & 0\\
0.111    & 3.77   & 0     & 0\\
0.00587  & 0.2    & 1.69  & 0.697\\
0.000851 & 0.0289 & 0.314 & 0.303\\
\end{array}\right).$

The actual (to three decimal places) limit matrices are 

$B = \left(
\begin{array}{cccc}
6 & 0 & 0 & 0 \\
0 & 6 & 0 & 0 \\
0 & 0 & 2.83 & 1.17 \\
0 & 0 & 0.508 & 0.492\\
\end{array}
\right)$
and 
$C = \left(\begin{array}{cccc}
4 & 0 & 0 & 0 \\
0 & 4 & 0 & 0 \\
0 & 0 & 1.7 & 0.705 \\
0 & 0 & 0.305 & 0.295 \\
\end{array}\right).$

Now we give some further examples which will be useful when reading the next sections, using notation and terminology introduced there.

We have $\Psi(B) = (I_1, J_1)$ where $I_1 = \{1,2\}$, $J_1 = \{1,2\}$.
One can check that we have $\vphi(A,r,c) = \Psi(B)$ in this case, as expected.

In step I of the algorithm, we find the splitting consisting of the blocks $(I_1, J_1)$ and $(I_2, J_2)$ where $I_2 = \{3,4\}$ and $J_2 = \{3,4\}$.

In step II applied to the block $(I_1, J_1)$ we find the subset $I_3 = \{1\} \subseteq I_1$ with the property $r(I_3) = 6 = \frac{12}{8}4 = \frac{r(I_1)}{c(J_1)} c(N(I_3) \cap J_1)$.
So the result of applying step II to $(I_1, J_1)$ is $(I_3, J_3)$ and $(I_4, J_4) := (I_1\backslash I_3, J_1\backslash J_3)$.
Applying step II to any of the blocks $(I_2,J_2)$, $(I_3,J_3)$ or $(I_4,J_4)$ yields nothing new, so the final splitting found is $\{(I_2,J_2), (I_3,J_3), (I_4,J_4)\}$, which coincides with the decomposition of $B$.

\section*{Limit points}

Let (for the remainder of this note) $B$, $C$ denote the limits of the sequences $B(k)$, $C(k)$. 

By a {\it splitting} $S$ we mean a set of pairs $(I,J)$ of sets of row respectively column indices, with the property that the sets $\{I: \exists J : (I,J)\in S\}$ form a partition of some underlying set of rows (which will always be the set of rows of $A$ below) and a similar statement holds for the sets $J$.
We call any pair $I,J$ of rows respectively columns a {\it block}.
An elementary refinement of a splitting consists of replacing a pair $(I,J)$ with $(I_1,J_1)$ and $(I_2,J_2)$ such that $I_1,I_2$ partition $I$ and $J_1,J_2$ partition $J$.
If the splitting $S\p$ is obtained by performing a sequence of elementary refinements on the splitting $S$ then we say that $S\p$ is a {\it refinement} of $S$.

By the {\it decomposition} of a matrix $B$ we will mean the splitting \\ $I_1,\dots,I_r$, $J_1,\dots,J_r$ of the row and column sets of $B$ such that $b_{ij} \neq 0\Rightarrow i\in I_k$ and $j\in J_k$ for some $k$, minimal with respect to refinement.

In this section we will describe the decomposition of $B$.
Since the decomposition of $B$ and of $C$ coincide, we will only mention the decomposition of $B$ in what follows.

Let $S(A) := \{(i,j):a_{ij} \neq 0\}$ denote the {\it support} of $A$.
By definition, $S(A) = S(\R(A))= S(\C(A))$. Clearly $S(B) = S(C) \subseteq S(A), \C(B) = C$, and $\R(C) = B$.

For subsets of rows $I$, define $r(I) := \sum_{i\in I} r_i$ and $c(J)$ similarly for subsets $J$ of columns.

Let $x_i$ and $y_j$ be such that $x_ib_{ij} = c_{ij}$ and $c_{ij}y_j = b_{ij}$.
Hence $x_ib_{ij}y_j = b_{ij}$ and thus $x_iy_j = 1$ whenever $(i,j) \in S(B)$. Let $I_1,\dots,I_r$, $J_1,\dots,J_r$ be the decomposition of $B$.
Then $x_i$ is constant for $i\in I_k$ for each $k$ and $y_j$ is constant in $J_k$ for each $k$, and if $i\in I_k$, $j\in J_k$ we have $x_i = 1/y_j = \frac{r(I_k)}{c(J_k)}$.

First, note that for each $k$, the submatrix $B[I_k, J_k]$ is a matrix with row sums $r_i$ and column sums $\frac{r(I_k)}{c(J_k)}c_j$.
Similarly, $C[I_k,J_k]$ has column sums $c_j$ and row sums $\frac{c(J_k)}{r(I_k)}r_i$.
Whenever $k \neq l$, we have $B[I_k, J_l] = C[I_k, J_l] = 0$. 

We will say a block $(I,J)$ is {\it feasible} if there is some $I \times J$ matrix $M$ with row sums $r_i$, column sums $\frac{r(I)}{c(J)}c_j$ and $S(M) \subseteq S(A)$. A splitting is feasible if all its blocks are feasible. So the decomposition of $B$ is clearly feasible.

The {\it quotient} of a block $(I,J)$ is the number $r(I)/c(J)$.

\section*{The first block}

We now give some definitions and lemmas needed for describing the algorithm.

\begin{lemma}
\label{numbers}
Let $p_1,\dots,p_n, q_1,\dots, q_n$ be positive real numbers. Then $\min_i \frac{p_i}{q_i} \leq \frac{p_1+\dots+p_n}{q_1+\dots+q_n} \leq \max_i \frac{p_i}{q_i}$. If any of the two inequalities is in fact an equality, then all the $p_i/q_i$ are equal.
\end{lemma}
\begin{proof} This follows by induction, the case $n=2$ being easy.\end{proof}
It is important to note that $\min_i p_i / \max_j p_j \leq (p_1+\dots+p_n)/(q_1+\dots+q_n)$ is a considerably weaker statement than Lemma \ref{numbers}.

We will use the following simple generalization of a well-known theorem by Philip Hall (see eg. \cite{match}).
\begin{lemma}
\label{hall}
Suppose $r_1+\dots+r_m = t(c_1+\dots+c_n)$ where $t$ is some positive real number. There is a matrix $B$ with $S(B) \subseteq S(A)$ and row sums $r_i$ and column sums $tc_j$ if and only if there is no subset $I$ of rows such that $r(I) > tc(N(I))$, where $N(I) = \{j\in [n]: \exists i : a_{ij} \neq 0\}$.
\end{lemma}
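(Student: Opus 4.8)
The plan is to view the existence of $B$ as a feasibility question for a network-flow (transportation) problem and to extract the stated subset condition from the max-flow min-cut theorem; equivalently one could invoke Gale's supply--demand theorem or linear-programming duality, and this is exactly where the Hall-type condition comes from. I would first dispose of the easy direction (necessity). If such a $B$ exists, then for any set of rows $I$ every nonzero entry $b_{ij}$ with $i\in I$ forces $j\in N(I)$, since $S(B)\subseteq S(A)$; hence $r(I)=\sum_{i\in I}\sum_{j\in N(I)} b_{ij} \le \sum_{i}\sum_{j\in N(I)} b_{ij} = t\,c(N(I))$, the inequality coming from adjoining the nonnegative rows outside $I$. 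So no violating $I$ can exist, which is the contrapositive of the forward implication.

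For the substantial direction I would build the following flow network. Introduce a source $s$ and a sink $\hat s$, a node for each row and each column, an arc $s\to i$ of capacity $r_i$, an arc $j\to\hat s$ of capacity $t\,c_j$, and for each $(i,j)\in S(A)$ an arc $i\to j$ of capacity $+\infty$ (any value $\ge r([m])$ works if one prefers finite capacities). Feasible matrices $B$ with $S(B)\subseteq S(A)$, row sums $r_i$ and column sums $t\,c_j$ correspond exactly to $s$--$\hat s$ flows that saturate every source arc, and hence, by the balance hypothesis $r([m])=t\,c([n])$, every sink arc as well. So it suffices to produce a flow of value $r([m])$.

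The heart of the argument is to evaluate the minimum cut. A cut is determined by the rows $I$ and columns $K$ left on the source side; it has finite value only if $N(I)\subseteq K$, since any arc $i\to j$ with $i\in I$, $j\notin K$, $(i,j)\in S(A)$ would otherwise be cut. Its value is then $\big(r([m])-r(I)\big)+t\,c(K)$, minimized at $K=N(I)$, giving $r([m])-\big(r(I)-t\,c(N(I))\big)$. Thus the minimum cut equals $r([m])$ exactly when $r(I)\le t\,c(N(I))$ for every $I$, which is precisely the hypothesis (the empty set yields a cut of value $r([m])$, so the minimum never exceeds it). By the max-flow min-cut theorem the maximum flow then has value $r([m])$, saturating all arcs out of $s$, and reading the arc flows $b_{ij}$ off this flow produces the desired matrix.

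The only place real care is needed, and hence the main obstacle, is the bookkeeping in the min-cut computation: verifying that finite cuts are in bijection with row-sets $I$ (by taking $K=N(I)$), and that the balance condition is what converts saturation of all source arcs into the correct column sums. A secondary point is that the capacities here are real rather than integral, so I would invoke the max-flow min-cut theorem in its real-capacity form, or equivalently strong LP duality, rather than the integral version; no integrality of $B$ is claimed or needed. A purely inductive proof mimicking the classical argument for Hall's theorem is also conceivable, splitting on whether some proper subset is tight, but it becomes fiddly with real weights, so I would favor the flow formulation.
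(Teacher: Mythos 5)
Your proof is correct. Note that the paper itself gives no proof of this lemma at all: it simply cites it as a ``simple generalization of a well-known theorem by Philip Hall'' with a pointer to Lov\'asz--Plummer, so any complete argument is necessarily a different route from the paper's. Your max-flow/min-cut derivation is the standard and right way to supply the missing proof. The necessity direction is handled correctly (every nonzero $b_{ij}$ with $i\in I$ has $j\in N(I)$, and padding with the remaining nonnegative rows gives $r(I)\le t\,c(N(I))$), and the cut bookkeeping is sound: finite cuts force $N(I)\subseteq K$, the minimum over such $K$ is attained at $K=N(I)$, and the balance hypothesis $r([m])=t\,c([n])$ is exactly what upgrades ``all source arcs saturated'' to ``all sink arcs saturated,'' so the column sums come out equal to $t\,c_j$ rather than merely bounded by them. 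Your remarks that the real-capacity form of max-flow min-cut (equivalently LP duality or Gale's supply--demand theorem) is the version needed, and that no integrality is claimed, are also apt. The one cosmetic point worth flagging is that the lemma's displayed definition of $N(I)$ has a typo ($\exists i$ should read $\exists i\in I$); your proof uses the intended meaning.
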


For initial data $A,r,c$, we define a subset $\vphi(A,r,c)$ of rows. In fact $\vphi(A,r,c)$ will depend only on the support of $A$, and not on the non-zero values themselves.

We define $\vphi(A,r,c)$ as the subset $I$ of rows such that $r(I) / c(N(I))$ is maximal and $\# I$ is maximal among those $I$ maximizing $r(I)/c(N(I))$.
Let us prove that $\vphi(A,r,c)$ is well defined.

\begin{lemma}
\label{unique}
	If $I_1$ and $I_2$ satisfy the definition of $\vphi(A,r,c)$, then $I_1 = I_2$.
\end{lemma}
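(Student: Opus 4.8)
The plan is to reduce the statement to a single structural fact: the collection of subsets maximizing $r(I)/c(N(I))$ is closed under taking unions. Once that is in hand, uniqueness of the maximal-cardinality maximizer follows formally. So first I would fix notation: let $\rho$ denote the maximal value of $r(I)/c(N(I))$ over all nonempty subsets $I$ of rows, so that $r(I) \le \rho\, c(N(I))$ for \emph{every} subset $I$ (including $I = \emptyset$, where both sides vanish by convention, since there are no zero rows or columns). Suppose $I_1$ and $I_2$ both attain the maximum, i.e. $r(I_a) = \rho\, c(N(I_a))$ for $a = 1,2$; the goal of this first step is to show $I_1 \cup I_2$ is again a maximizer.

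The computation rests on two set-theoretic facts about neighbourhoods: $N(I_1 \cup I_2) = N(I_1) \cup N(I_2)$, an honest equality, and $N(I_1 \cap I_2) \subseteq N(I_1) \cap N(I_2)$, which is in general only an inclusion. Combining inclusion–exclusion for $r$, the global bound $r(I_1 \cap I_2) \le \rho\, c(N(I_1 \cap I_2))$, and then monotonicity of $c$ on the above inclusion together with additivity of $c$, I get
\begin{align*}
r(I_1 \cup I_2) &= r(I_1) + r(I_2) - r(I_1 \cap I_2) \\
&\ge \rho\, c(N(I_1)) + \rho\, c(N(I_2)) - \rho\, c(N(I_1 \cap I_2)) \\
&\ge \rho\bigl(c(N(I_1)) + c(N(I_2)) - c(N(I_1) \cap N(I_2))\bigr) \\
&= \rho\, c(N(I_1) \cup N(I_2)) = \rho\, c(N(I_1 \cup I_2)).
\end{align*}
Since $\rho$ is the maximum, the reverse inequality $r(I_1 \cup I_2) \le \rho\, c(N(I_1 \cup I_2))$ also holds, so equality propagates through the chain and $I_1 \cup I_2$ is a maximizer of the ratio.

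With this, I would finish as follows. If $I_1$ and $I_2$ both satisfy the definition of $\vphi(A,r,c)$, then both are ratio-maximizers and both have the maximal cardinality among ratio-maximizers. By the previous step $I_1 \cup I_2$ is also a maximizer, so $\#(I_1 \cup I_2) \le \#I_1$; but $I_1 \subseteq I_1 \cup I_2$ forces $\#(I_1 \cup I_2) \ge \#I_1$, hence equality and $I_1 = I_1 \cup I_2$, i.e. $I_2 \subseteq I_1$. By symmetry $I_1 \subseteq I_2$, and therefore $I_1 = I_2$.

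The main obstacle is managing the \emph{direction} of the two inequalities in the display. The passage from $r(I_1 \cap I_2)$ to $\rho\, c(N(I_1 \cap I_2))$ uses the global maximality of $\rho$ as an upper bound, which—after subtraction—must be applied as a lower bound; and the passage from $c(N(I_1 \cap I_2))$ to $c(N(I_1) \cap N(I_2))$ uses monotonicity of $c$ on the one genuine inclusion among the neighbourhood sets. Keeping straight which containment between neighbourhoods is an equality and which is merely an inclusion is the only subtle point; the rest is the additivity of $r$ and $c$.
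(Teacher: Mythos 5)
Your proof is correct and rests on the same uncrossing idea as the paper's: modularity of $r$ together with the inequality $c(N(I_1)) + c(N(I_2)) \geq c(N(I_1\cup I_2)) + c(N(I_1\cap I_2))$ (coming from $N(I_1\cup I_2)=N(I_1)\cup N(I_2)$ and $N(I_1\cap I_2)\subseteq N(I_1)\cap N(I_2)$) shows that the family of ratio-maximizers is closed under union, after which cardinality-maximality forces $I_1 = I_2$. The only difference is in the packaging: you bound $r(I_1\cup I_2)$ directly by subtracting the global bound on $r(I_1\cap I_2)$, whereas the paper forms the mediant and invokes Lemma \ref{numbers} (implicitly including its equality case) to conclude that the union or the intersection yields a contradiction; your route is, if anything, slightly cleaner since it avoids that case distinction and handles $I_1\cap I_2=\emptyset$ without comment.
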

\begin{proof}
We have $r(I_1) + r(I_2) = r(I_1\cup I_2) + r(I_1\cap I_2)$ and $c(N(I_1)) + c(N(I_2)) = c(N(I_1) \cup N(I_2)) + c(N(I_1) \cap N(I_2)) \geq c(N(I_1\cup I_2)) + c(N(I_1\cap I_2))$. Therefore

$$
\frac{r(I_1) + r(I_2)}{c(N(I_1)) + c(N(I_2))} \leq \frac{r(I_1\cup I_2) + r(I_1\cap I_2)}{c(N(I_1\cup I_2)) + c(N(I_1\cap I_2))}.
$$
By Lemma \ref{numbers} either $I_1 \cup I_2$ or $I_1\cap I_2$ shows that neither $I_1$ nor $I_2$ can satisfy the definition of $\vphi(A,r,c)$.
\end{proof}

Using linear programming, it is easy to compute $\vphi(A,r,c)$.

Though it will follow from proposition \ref{equal}, it is interesting to note that we can prove directly that $(I, N(I))$ is feasible, where $I =\vphi(A,r,c)$.
\begin{lemma} 	Let $I = \vphi(A,r,c)$. Then the block $(I, N(I))$ is feasible. \end{lemma}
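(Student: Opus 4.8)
The plan is to apply Lemma~\ref{hall} to the submatrix $A[I, N(I)]$, taking as target row sums $r_i$ for $i \in I$ and as target column sums $t\,c_j$ for $j \in N(I)$, where $t := r(I)/c(N(I))$. The first step is to verify the hypothesis of that lemma: the total target row sum is $r(I)$, while the total target column sum is $t\,c(N(I)) = r(I)$, so the two agree and Lemma~\ref{hall} does apply with this value of $t$. A matrix witnessing feasibility of $(I, N(I))$ is then precisely a matrix $M$ on $I \times N(I)$ with these sums and $S(M) \subseteq S(A)$.

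Next I would translate the condition furnished by Lemma~\ref{hall} into a statement about ratios. By that lemma, such an $M$ exists if and only if there is no subset $I' \subseteq I$ with $r(I') > t\,c(\widetilde N(I'))$, where $\widetilde N(I')$ denotes the neighborhood of $I'$ computed inside the submatrix $A[I, N(I)]$, i.e. restricted to the column set $N(I)$. The key observation is that $\widetilde N(I') = N(I')$: since $I' \subseteq I$ we have $N(I') \subseteq N(I)$, so intersecting the neighborhood with $N(I)$ removes nothing. Hence the condition to be ruled out is exactly the existence of some $I' \subseteq I$ with $r(I')/c(N(I')) > t = r(I)/c(N(I))$.

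Finally, this is immediate from the definition of $\vphi$. Since $I = \vphi(A,r,c)$ maximizes $r(\cdot)/c(N(\cdot))$ over all subsets of rows, it certainly does so over the subsets $I' \subseteq I$, so no such $I'$ can exist. Therefore the Hall-type condition of Lemma~\ref{hall} is never violated, the matrix $M$ exists, and $(I, N(I))$ is feasible.

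I do not expect a genuine obstacle here: the only point requiring care is the reduction $\widetilde N(I') = N(I')$, that is, checking that passing to the submatrix $A[I, N(I)]$ changes none of the relevant neighborhoods. Once that is in place, feasibility follows at once from the maximality of the quotient built into the definition of $\vphi(A,r,c)$, with Lemma~\ref{numbers} available should one wish to argue the strictness of the comparison more carefully.
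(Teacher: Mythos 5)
Your proposal is correct and follows the same route as the paper: invoke the Hall-type criterion (Lemma~\ref{hall}) on the block and observe that any violating subset $I' \subseteq I$ would have $r(I')/c(N(I')) > r(I)/c(N(I))$, contradicting the maximality built into $\vphi(A,r,c)$. The extra care you take in checking that the sums match and that neighborhoods computed in the submatrix $A[I,N(I)]$ agree with those in $A$ is exactly the bookkeeping the paper leaves implicit.
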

\begin{proof} Suppose $(I, N(I))$ is not feasible. By Lemma \ref{hall} there is then some $I\p \subseteq I$ such that $r(I\p) > \frac{r(I)}{c(N(I))}c(N(I\p))$, or $\frac{r(I\p)}{c(N(I\p))} > \frac{r(I)}{c(N(I))}$. But this contradicts the choice of $I$. \end{proof}

Consider the decomposition $\mcl{D}\p$ of $B$ and denote by $\Psi(B)$ the block obtained by merging all blocks with maximal quotient into a single block (which will have this same quotient). We will denote by $\mcl{D}$ the decomposition obtained from $\mcl{D}\p$ after this merge. Of course, the blocks in $\mcl{D}$ are all feasible (since this is true for $\mcl{D}\p$).

\begin{proposition}
\label{equal}
	Let $I = \vphi(A,r,c)$. We have $(I, N(I)) = \Psi(B)$.
\end{proposition}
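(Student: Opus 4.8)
The plan is to compute both sides explicitly in terms of the block quotients and match them. Write the decomposition $\mcl{D}\p$ of $B$ as blocks $(I_k,J_k)$ with quotients $q_k=r(I_k)/c(J_k)$, let $q^\ast=\max_k q_k$, and set $I^\ast=\bigcup_{q_k=q^\ast}I_k$ and $J^\ast=\bigcup_{q_k=q^\ast}J_k$, so that $\Psi(B)=(I^\ast,J^\ast)$ and, by Lemma \ref{numbers}, $r(I^\ast)/c(J^\ast)=q^\ast$. The goal then becomes to prove $\vphi(A,r,c)=I^\ast$ together with $N(I^\ast)=J^\ast$.

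The crux is the following claim about the support of $A$: if $a_{ij}\neq 0$ with $i\in I_k$ and $j\in J_l$, then $q_k\le q_l$. I would prove this by examining the scaling factors of the iteration rather than just the limit. Let $\lambda_i(n)$ and $\mu_j(n)$ be the row and column multipliers applied at step $n$, so that $b_{ij}(n)=\lambda_i(n)\mu_j(n-1)\,b_{ij}(n-1)$. Since $B(n)\to B$ and $C(n)\to C$, and since the row sums of $C$ on block $k$ equal $r_i/q_k$ while the column sums of $B$ on block $l$ equal $q_l c_j$, one gets $\lambda_i(n)\to q_k$ and $\mu_j(n)\to 1/q_l$; hence the one-step ratio $b_{ij}(n)/b_{ij}(n-1)\to q_k/q_l$. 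When $k\neq l$ we have $b_{ij}=0$, so $b_{ij}(n)$ is a sequence of strictly positive numbers (positive because $S(B(n))=S(A)$) tending to $0$, and a positive null sequence cannot have limiting ratio exceeding $1$, forcing $q_k/q_l\le 1$. This dynamical input, which is exactly where the identity of the true limit, as opposed to the merely combinatorial block data, enters, is the step I expect to be the main obstacle; everything afterwards is elementary.

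Granting the claim, the description of $(I^\ast,J^\ast)$ follows quickly. For $N(I^\ast)\subseteq J^\ast$: any $A$-edge leaving a row $i\in I_k$ with $q_k=q^\ast$ lands in a block $J_l$ with $q_l\ge q^\ast$, hence $q_l=q^\ast$ and the column lies in $J^\ast$. The reverse inclusion holds because each column $j\in J_k$ has positive $B$-column sum $q_k c_j$, so some $b_{ij}\neq0$ with $i\in I_k\subseteq I^\ast$, whence $a_{ij}\neq0$. Thus $N(I^\ast)=J^\ast$ and $r(I^\ast)/c(N(I^\ast))=q^\ast$.

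Finally I would show that $q^\ast$ is the maximal value of $r(I)/c(N(I))$ and that $I^\ast$ is its largest maximizer, which by Lemma \ref{unique} pins down $\vphi$. For any $I$, using that the row sums of $B$ are $r_i$ and that $S(B)\subseteq S(A)$,
$$
r(I)=\sum_{i\in I,\ j\in N(I)}b_{ij}\le\sum_{j\in N(I)}\sum_i b_{ij}=\sum_{j\in N(I)}q_{l(j)}c_j\le q^\ast c(N(I)),
$$
where $l(j)$ denotes the block containing column $j$; this gives $r(I)/c(N(I))\le q^\ast$. Inspecting the two inequalities, equality forces $q_{l(j)}=q^\ast$ for every $j\in N(I)$ (so $N(I)\subseteq J^\ast$) and $b_{ij}=0$ for all $i\notin I$, $j\in N(I)$. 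Then every $i\in I$ has a nonzero $b_{ij}$ with $j\in N(I)\subseteq J^\ast$, placing $i$ in a maximal-quotient block, i.e. $I\subseteq I^\ast$. Since $I^\ast$ itself attains the bound, it is the unique largest maximizer, so $\vphi(A,r,c)=I^\ast$ and therefore $(I,N(I))=(I^\ast,J^\ast)=\Psi(B)$.
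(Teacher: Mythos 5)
Your proof is correct, and it shares its dynamical core with the paper's while replacing the combinatorial half by a genuinely different, more elementary argument. Your key claim (an $A$-edge from a row of block $k$ to a column of block $l$ forces $q_k\le q_l$) is proved exactly as in the paper's first step: the one-step multiplier product tends to $q_k/q_l$, and a positive sequence with limiting ratio greater than $1$ blows up, contradicting boundedness; the paper states this only for the maximal block, which is also the only case you end up using. Where you diverge is in identifying $\vphi(A,r,c)$ with $I^\ast$. The paper runs the mediant inequality (\ref{moo}) over the blocks of $\mcl{D}$, invokes Lemma \ref{numbers} to force all the partial ratios to agree, and then uses Lemma \ref{hall} together with feasibility of each block to rule out extra terms. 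You instead double-count the entries of $B$ directly, getting $r(I)\le\sum_{j\in N(I)}q_{l(j)}c_j\le q^\ast c(N(I))$ for every $I$, and read off from the two equality conditions that any maximizer has $N(I)\subseteq J^\ast$ and $I\subseteq I^\ast$. This buys you independence from Hall's theorem and from the notion of feasibility altogether (you do not even strictly need Lemma \ref{unique}, since containment of every maximizer in the maximizer $I^\ast$ already pins it down), at the cost of leaning slightly more on the explicit row and column sums of the limit blocks; the paper's route, by contrast, exercises the feasibility machinery that it needs elsewhere for step II of the algorithm. Both arguments rely on the standing hypotheses that $S(B(n))=S(A)$ and that no row or column of $A$ vanishes, which you use correctly.
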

\begin{proof}
	Let $(I_1, J_1) = \Psi(B)$. We wish to prove that $I_1 = I$ and $J_1 = N(I)$.
	We do this in five steps.
\begin{itemize}
	\item $J_1 = N(I_1)$.

		Suppose this is not the case. Then there are $p \in I_1$, $q \notin J_1$ such that $a_{pq} \neq 0$. Denote by $(I_2, J_2)$ the block in the decomposition of $B$ such that $q \in J_2$.
		
		Let $y_j(k)$ be the column multipliers used when computing\\ $\C(B(k)) = C(k)$ and $x_i(k)$ the row multipliers used when computing $\R(\C(B(k))) = B(k+1)$.
		We know that $x_p(k) \to \frac{r(I_1)}{c(J_1)}$ and $y_q(k) \to \frac{c(J_2)}{r(I_2)}$ as $k\to\infty$.
		Therefore $x_p(k)y_q(k) \to \frac{r(I_1)}{c(J_1)}\frac{c(J_2)}{r(I_2)} > 1$.
		Choose $\eta > 0$ and $K$ such that $x_p(k)y_q(k) > 1+\eta$ for all $k\geq K$.
		Hence $b_{pq}(K+n) > (1+\eta)^n b_{pq}(K)\to\infty$ as $n\to\infty$.
		This contradicts the fact that all entries in the $B(k)$ are bounded by $\max(r_1+\dots+r_m, c_1+\dots+c_n)$.

	\item $r(I) / c(N(I)) = r(I_1) / c(J_1)$ and
	\item $I \subseteq I_1$.

		The proofs of these two statements are similar so we do them simultaneously.
		It follows from the previous step that $r(I)/c(N(I)) \geq r(I_1) / c(J_1)$.

		Note that 
		\begin{equation}
			\label{moo}
			\frac{r(I)}{c(N(I))} = \frac{\sum_{(I\p,J\p)} r(I\cap I\p)}
																	{\sum_{(I\p,J\p)} c(N(I)\cap J\p)}
													\leq
														 \frac{\sum_{(I\p,J\p)} r(I\cap I\p)}
																	{\sum_{(I\p,J\p)} c(N(I\cap I\p)\cap J\p)}
		\end{equation}
		where the sums range over all $(I\p, J\p)\in \mcl{D}$ with $I \cap I\p \neq \emptyset$.

	Since $I_1,J_1$ has maximal quotient in $\mcl{D}$, for any $(I\p,J\p) \in \mcl{D}$, $r(I\cap I\p) / c(N(I\cap I\p) \cap J\p)  \leq r(I_1)/c(J_1) \leq r(I)/c(N(I))$.
	By the formula above and Lemma \ref{numbers}, all the numbers $r(I\cap I\p)/c(N(I\cap I\p) \cap J\p)$ therefore have to be equal, and this common value is $r(I)/c(N(I))$.
	
	Now, suppose $r(I_1)/c(N(I_1)) < r(I) / c(N(I))$.
	Take any term $(I\p$, $J\p)$ in (\ref{moo}). We then have $\frac{r(I\cap I\p)}{c(N(I\cap I\p)\cap J\p)} = \frac{r(I)}{c(N(I))} > \frac{r(I_1)}{c(J_1)} \geq \frac{r(I\p)}{c(J\p)}$. But by Lemma \ref{hall}, this contradicts $(I\p, J\p)$ being feasible.

	So $r(I_1)/c(N(I_1)) = r(I)/c(N(I))$.
	Using a similar argument as in the previous paragraph, if we have any $I\p \neq I$ occurring in (\ref{moo}), then this will contradict $(I\p, J\p)$ being feasible. Thus the only term in (\ref{moo}) is $\frac{r(I\cap I_1)}{c(N(I\cap I_1)\cap J_1)}$, and consequently $I\subseteq I_1$.

	\item $I = I_1$

		This follows directly from the previous two steps and lemma \ref{unique}.
		
	\item $N(I) = J_1$

		We have $J_1 = N(I_1) = N(I)$.
\end{itemize}
\end{proof}

\section*{The algorithm}

The algorithm consists of two steps, I and II. We first describe step I.

The output of step I is the splitting given by the blocks $(I_1,J_1)$, $\dots,$ $(I_r,J_r)$ where $(I_1, J_1) = \vphi(A,r,c)$, $(I_2, J_2) = \vphi(A[I\p, J\p], r_{|I\p}, c_{|J\p})$ with $I\p = [m]-I$, $J\p = [n]-J$ and so on. Here we think of $r$ and $c$ as functions, and $r_{|I\p}$ et.c. denotes restriction.

It follows from Lemma \ref{equal} that the decomposition of $B$ is a refinement of the splitting $S$ obtained in step I.

Step II consists of finding this refinement. This is implicit already in the work of Pretzel \cite{Pretzel}, but we give a description here for completeness. It is sufficient to describe step II for a single block of $S$, which we assume to be all of $A$ for ease of notation.
Using linear programming we can determine whether there is a proper subset $I \subseteq [m]$ such that $r(I) = \frac{r([m])}{c([n])}c(N(I))$. In case there are none, the output is just the original block.
In case there is such a block $I$, we output $(I, N(I))$ and $([m]-I, [n]-N(I))$ and apply step II recursively to these two blocks.

To show that this indeed generates $\mcl{D}$, it is sufficient to prove the following lemma, where we have assumed $r([m]) = c([n])$ for ease of notation.
\begin{lemma}
	If the only $I$ satisfying $r(I) \geq c(N(I))$ are $I = \emptyset$ and $[m]$, then $S(A) = S(B)$.	
\end{lemma}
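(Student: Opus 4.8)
The plan is to prove the only inclusion that is missing: since always $S(B)\subseteq S(A)$, it suffices to show that every edge $(p,q)\in S(A)$ survives, i.e.\ $b_{pq}\neq 0$. First I would record what the hypothesis buys. Because no proper nonempty $I$ has $r(I)\geq c(N(I))$, and $r([m])=c([n])=c(N([m]))$, there is no $I$ at all with $r(I)>c(N(I))$; so by Lemma \ref{hall} (with $t=1$) the family $\mcl{F}$ of matrices $D$ with $S(D)\subseteq S(A)$, row sums $r_i$ and column sums $c_j$ is nonempty. The whole argument then rests on two facts: (a) every $D\in\mcl{F}$ satisfies $S(D)\subseteq S(B)$, and (b) under the \emph{strict} inequalities $r(I)<c(N(I))$ for proper $I$, the supports of the $D\in\mcl{F}$ cover all of $S(A)$. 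Together these give $S(A)=\bigcup_{D\in\mcl{F}}S(D)\subseteq S(B)\subseteq S(A)$.

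For (a) I would track, for a fixed $D\in\mcl{F}$, the quantity $P(k):=\prod_{(i,j)\in S(A)} b_{ij}(k)^{d_{ij}}$ and show it is non-decreasing along the ISP. Consider a row adjustment applied to a matrix with column sums $c_j$: writing $R_i=\sum_j b_{ij}(k)$, the entries of row $i$ get multiplied by $r_i/R_i$, so $P$ is multiplied by $\prod_i (r_i/R_i)^{\sum_j d_{ij}}=\prod_i (r_i/R_i)^{r_i}$, using $\sum_j d_{ij}=r_i$. As the matrix being adjusted has just been column-balanced, $\sum_i R_i=\sum_j c_j=\sum_i r_i$, so $r_i/\sum r$ and $R_i/\sum R$ are probability vectors and Gibbs' inequality (weighted AM--GM) gives $\sum_i r_i\log(r_i/R_i)\geq 0$, i.e.\ the factor is $\geq 1$. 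The same holds for column adjustments, so $P(k)\geq P(1)>0$ for all $k$. If some $(p,q)$ with $d_{pq}>0$ had $b_{pq}(k)\to 0$, that factor would tend to $0$ while every other factor stays bounded above (all entries of $B(k)$ are at most $\max(\sum r,\sum c)$, as noted in the proof of Proposition \ref{equal}); hence $P(k)\to 0$, a contradiction. Thus $b_{pq}\neq 0$, giving $S(D)\subseteq S(B)$.

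For (b) I would show each $(p,q)\in S(A)$ lies in the support of some $D\in\mcl{F}$ by reserving a little mass for it. For $\e>0$ set $r\p_p=r_p-\e$, $c\p_q=c_q-\e$ and leave the other targets unchanged, so $\sum r\p=\sum c\p$; a solution $D\p$ of this residual problem yields $D=D\p+\e\,E_{pq}\in\mcl{F}$ with $d_{pq}\geq\e>0$, where $E_{pq}$ has a single $1$ in position $(p,q)$. By Lemma \ref{hall} the residual problem is solvable provided $r\p(I)\leq c\p(N(I))$ for all $I$. Checking this against the original inequalities, the only dangerous case is $p\notin I$, $q\in N(I)$, where one needs $\e\leq c(N(I))-r(I)$; but such $I$ are proper and nonempty, so by hypothesis the slack is strictly positive, and taking $\e$ below the minimum of these finitely many slacks makes the residual feasible. (When $p\in I$ one automatically has $q\in N(I)$ since $a_{pq}\neq 0$, and then both targets drop by $\e$.) This proves (b) and completes the proof.

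The main obstacle is step (a): everything else is bookkeeping with Lemma \ref{hall}, but (a) is where the dynamics of the procedure enter, and the crux is isolating the monotone functional $P(k)$ whose non-decrease is exactly Gibbs' inequality at each half-step. The only other point needing care is the uniform choice of $\e$ in (b); it is legitimate precisely because there are finitely many subsets $I$ and the hypothesis gives each proper one a strictly positive slack $c(N(I))-r(I)$ — which is exactly where the gap between $r(I)\geq c(N(I))$ and the strict $r(I)>c(N(I))$ is used.
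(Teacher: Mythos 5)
Your proof is correct, and half of it is exactly the paper's argument: the paper also establishes $S(A)\subseteq S(B)$ by the same reservation device, applying Lemma~\ref{hall} to the perturbed targets $r_p-\e$, $c_q-\e$ and using the strict slack $c(N(I))-r(I)>0$ on proper nonempty $I$; the only difference there is that the paper argues by contradiction, letting $\e\to 0$ to extract a proper subset with $r(I\p)\geq c(N(I\p))$, while you pick $\e$ uniformly below the finitely many positive slacks (you should also take $\e<\min(r_p,c_q)$ so the residual targets stay positive, but that is cosmetic). The genuine divergence is your step (a): the paper simply quotes Theorem~1 of Pretzel for the fact that $S(B)$ contains the support of every matrix with the prescribed marginals and support inside $S(A)$, whereas you prove the special case you need from scratch by tracking $P(k)=\prod_{(i,j)}b_{ij}(k)^{d_{ij}}$ and observing that each half-step multiplies it by $\prod_i(r_i/R_i)^{r_i}\geq 1$ by Gibbs' inequality --- legitimate because the matrix being row-adjusted has just been column-balanced, so $\sum_i R_i=\sum_i r_i$ --- while $P(k)\to 0$ if any entry in the support of $D$ died. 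This is essentially the I-divergence monotonicity underlying Pretzel's and Csisz\'ar--Shields' convergence results, so what your route buys is self-containedness (no appeal to an external maximal-support theorem) at the price of re-deriving a known monotone functional; the paper's route is shorter but leans on the citation. Incidentally, the paper's opening line ``$(p,q)\in S(B)\backslash S(A)$'' is a typo for $S(A)\backslash S(B)$, a slip your formulation avoids.
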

\begin{proof}
	Suppose $(p,q) \in S(B) \backslash S(A)$. 
	By theorem 1 in \cite{Pretzel}, the support of $B$ is the largest possible among all matrices $B\p$ with row sums $r_i$ column sums $c_j$ and satisfying $S(B\p) \subseteq S(A)$.

	Therefore, for any $\e > 0$, there is no matrix with support a subset of $S(A)$, row sums $r\p_i$ and column sums $c\p_j$, where $r\p_i = r_i$ for $i\neq p$, $r\p_i = r_i -\e$ and $c\p_j = c_j$ for $j\neq q$ and $c\p_q = c_q - \e$.
	By lemma \ref{hall}, this means there is some proper subset $I\p = I\p(\e)$ of rows such that $r\p(I\p) > c\p(N(I\p))$.
	Letting $\e\to 0$ (and observing that the number of possible subsets $I\p$ is finite) this gives us a proper subset $I\p$ with $r(I\p) = c(N(I\p))$, a contradiction.
\end{proof}

Now, again referring to \cite{Pretzel}, if change $A$ by setting the entries in $S(A)\backslash S(B)$ to $0$, the ISP limit points will not change. However numerical experiments suggest that convergence is much quicker than without the change.

As a small example of this, take the matrix from the example above, set the entries outside the splitting found ($(1,3)$, $(1,4)$, $(2,1)$, $(2,3)$, and $(2,4)$) to zero. Then it takes about $3$ iterations to get as close to $(B,C)$ in the example, as $B(500)$ and $C(500)$ from earlier are from $(B,C)$.

\section*{Future work}

I would like to mention some related extensions and problems.

First, most results above seem to carry over to the much more general setting of Theorem 5.2 in \cite{CsiShi}, but I have not explored this further.

Second, there is a natural version of the ISP with continuous time, as follows.
Let $\vtheta\in[0,1]$, and define $\R_\vtheta(A)_{ij} = x_i^\vtheta A_{ij}$, and $\C_\vtheta(A)_{ij} = A_{ij}y_j^\vtheta$.
Now we can define $F_{\alpha,\beta} = R_\alpha C_\beta$, and ask about the properties of $\lim_{\e\to 0}\lim_{n\to\infty} F^{(n)}_{\alpha\e, \beta\e}(A)$.
The matrix $F_{\alpha,\beta}(A)$ will be diagonally equivalent to $A$ and from this it follows (cf. \cite{Pretzel}) that the limit $\lim_{\e\to 0}\lim_{n\to\infty} F^{(n)}_{\alpha\e,\beta\e}(A)$ will be the same as the ordinary ISP limit of $A$ if the latter exists.
It is not clear what happens in the general case when that limit does not exist. 

Finally, the ISP can of course be defined for arbitrary, not necessarily nonnegative, $A,r,c$.
Two issues arise in this case.
One problem is that we may obtain matrices having marginals equal to $0$ during the iteration.
This could probably be avoided by using the continuous version described above; it seems reasonable such a system would repel from matrices having some marginal close to $0$.
Also, it will not be possible to prove the analogous statements about the limit points, since they are not true. For example, applying ISP (with discrete time) to the initial data

$\begin{array}{c|cccc}
 	& 4 & 6 \\ \hline
13 & 1 & 2 \\
-12 & 3 & 4 \\
\end{array}$

gives a sequence with period $4$. This cycle appears to be unstable.

\end{document}